\documentclass[12pt,twoside,reqno]{amsart}

\usepackage{amsmath, amssymb, amsthm, enumitem, esint}
\usepackage[hyperindex]{hyperref}

\usepackage{hyperref}
\hypersetup{bookmarksdepth=3}

\newcommand{\IR}{\mathbb{R}}

\newcommand{\IB}{\mathbb{B}}
\newcommand{\IS}{\mathbb{S}}

\newcommand{\UU}{\mathcal{U}}
\newcommand{\VV}{\mathcal{V}}
\newcommand{\MM}{\mathcal{M}}

\newcommand{\XX}{\mathcal{X}}

\newcommand{\eps}{\varepsilon}
\newcommand{\la}{\lambda}

\newcommand{\ov}[1]{\overline{#1}}
\newcommand{\td}[1]{\widetilde{#1}}

\DeclareMathOperator{\sing}{sing}

\DeclareMathOperator{\spt}{spt}

\newcommand{\EMPTY}[1]{}

\newtheorem{Theorem}[equation]{Theorem}

\newtheorem{Corollary}[equation]{Corollary}

\newtheorem{Claim}[equation]{Claim}

\theoremstyle{definition}

\theoremstyle{remark}

\numberwithin{equation}{section}

\title[Morse resolution of flows with cylindrical singularities]{Morse resolution of mean curvature flows with cylindrical singularities}
\author{Richard H Bamler, Felix Schulze and Lu Wang}
\address{Department of Mathematics, UC Berkeley, CA 94720, USA}
\email{rbamler@berkeley.edu}
\address{Mathematics Institute, University of Warwick, Coventry CV4 7AL, UK}
\email{felix.schulze@warwick.ac.uk}
\address{Department of Mathematics, Yale University, New Haven, CT 06511, USA}
\email{lu.wang@yale.edu}
\date{\today}

\begin{document}

\begin{abstract}
We show that a compact mean curvature flow whose singularities have multiplicity-one cylindrical tangent flows admits a smooth Morse resolution.
The resolution agrees with the spacetime track outside any prescribed neighborhood of its singular set and all its critical points have the expected index.
No nondegeneracy or isolatedness assumption is required.
In the mean-convex case the result follows by smoothing the global arrival function.
\end{abstract}

\maketitle

\section{Introduction}

In recent years, there has been growing interest in understanding the topological change of mean curvature flow in the presence of cylindrical singularities.
This question has also served as one motivation for analytic investigations of nondegenerate and generic cylindrical singularities.
Sun--Wang--Xue~\cite{SunWangXue} describe the evolution of the flow through a single nondegenerate cylindrical singularity, while Sz\'ekelyhidi~\cite{Szekelyhidi} constructs an almost mean curvature flow with only spherical and nondegenerate cylindrical singularities and uses their result to interpret its topological change.
In dimension $2$, the monotonicity of the genus was discussed in~\cite{WhiteGenus}.

The purpose of this note is to clarify that nondegeneracy and isolatedness are not needed for this more basic topological question.
While the results just mentioned remain interesting for their stronger analytic and geometric conclusions, a more elementary construction can be used to understand the topology: the spacetime track can be replaced, in an arbitrarily small neighborhood of its singular set, by a smooth hypersurface on which time is a Morse function of the expected index.

The proof can be summarized as follows:
Near a cylindrical singular point the flow is locally mean-convex and can therefore be described, in a spatial neighborhood in $\IR^{n+1}$, by the level sets of an arrival function $u$.
We smooth $u$ by convolution and then make a small Morse perturbation.
If the required index bound failed at arbitrarily small scales, then a blow-up would give one of the known convex ancient models, whose arrival function has sufficiently many strictly concave directions after convolution.
This gives a contradiction, and the result follows from elementary Morse theory.
In particular, potential accumulation of cylindrical singular points causes no additional difficulty.

Part of this argument relies on the recent full resolution of the Mean Convex Neighborhood Conjecture~\cite{BamlerLai}, which gives the required local mean-convexity and blow-up control for general flows.
This input is not necessary in settings where the behavior near cylindrical singularities is already sufficiently well understood.
For example, in the globally mean-convex case there is a global arrival function and the classical theory of White~\cite{WhiteNature,WhiteSubsequent} and Haslhofer--Kleiner~\cite{HaslhoferKleiner} suffices; for $1$-cylindrical singularities one may instead use the earlier local theory of Choi--Haslhofer--Hershkovits--White~\cite{ChoiHaslhoferHershkovitsWhite}.
In particular, for two-dimensional flows, the genus question in~\cite{WhiteGenus} can also be settled directly from their canonical neighborhood theorem.

\section{Statement of the result}

In the following we use the standard notation and conventions for Brakke flows; the preliminaries of~\cite{BamlerLai} may be used as a reference.

\begin{Theorem}[Morse resolution]\label{Thm_main}
Let $\MM$ be an $n$-dimensional, unit-regular, integral Brakke flow with compact support in $\IR^{n+1}\times[0,T]$.
Suppose that its time-$0$ and time-$T$ slices are regular and given by smooth embedded multiplicity-one hypersurfaces, where the latter is allowed to be empty.
Suppose moreover that at every singular point $q\in\MM^{\sing}$ some tangent flow is a multiplicity-one cylinder $\IR^{k(q)}\times\IS^{n-k(q)}$, where
\[
 0\leq k(q)\leq k_0\leq n-1.
\]
Then given any open neighborhood $\UU$ of $\MM^{\sing}$ in $\IR^{n+1}\times(0,T)$, there is a smooth embedded $(n+1)$-dimensional hypersurface $\XX\subset\IR^{n+1}\times[0,T]$ with the following properties:
\begin{enumerate}[label=\textup{(\alph*)}]
\item $\XX\setminus\UU=(\spt\MM)\setminus\UU$, and $\partial\XX$ consists of the prescribed time-$0$ and time-$T$ slices;
\item the restriction $\tau:=\mathbf{t}|_{\XX}$ of the time function $\mathbf{t}$ is a Morse function, all its critical points lie in $\UU$, and its critical values are distinct;
\item every critical point of $\tau$ has Morse index at least $n-k_0+1$.
\end{enumerate}
The hypersurface $\XX$ may be chosen to lie in an arbitrarily small neighborhood of $\spt\MM$.
\end{Theorem}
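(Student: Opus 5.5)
The plan is to follow the strategy sketched in the introduction: work locally near the singular set with an arrival function, smooth it, perturb it to be Morse, and control the index by a blow-up argument. First, by the resolution of the Mean Convex Neighborhood Conjecture~\cite{BamlerLai}, every $q\in\MM^{\sing}$ has a spacetime neighborhood $B(x_q,r_q)\times(t_q-r_q^2,t_q+r_q^2)$ in which the flow is mean-convex. There it is the level-set flow of a continuous arrival function $u_q$ on a spatial ball, with $\spt\MM=\{(x,u_q(x))\}$ locally and $u_q$ smooth with $\nabla u_q\neq 0$ away from singular points. Since $\MM^{\sing}$ is compact and lies in $\UU$, we can cover it by finitely many such neighborhoods contained in $\UU$. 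The singular times are closed and avoid $0$ and $T$, so the boundary slices are untouched. In each chart the spacetime track is the graph of $u_q$ over space. Replacing this track by the graph of a smooth function $\td u$ gives a smooth embedded hypersurface on which $\tau=\td u$, with critical points of $\tau$ exactly those of $\td u$. The Morse index of $\tau$ at such a point is the number of negative eigenvalues of $\nabla^2\td u$. Hence (c) amounts to saying $\nabla^2 \td u$ has at least $n-k_0+1$ negative eigenvalues at each critical point.

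Second, we construct $\td u$ on each chart. Take $u_\eps=u_q*\varphi_\eps$, cut it back to $u_q$ via a partition of unity outside a small neighborhood of the singular set, and add a small generic linear or quadratic perturbation so that the result is Morse with distinct critical values. Far from the singular set, $u_q$ is smooth with nonvanishing gradient, so $C^1$-closeness prevents new critical points in the transition region. Overlaps between charts are handled by performing the modifications successively, or by using a single arrival function when the mean-convex regions overlap. Overlapping neighborhoods define the same track, so the local arrival functions agree and the spatial mollifications can be glued with cutoffs. A final small perturbation, localized near the finitely many critical points, makes the critical values distinct. Keeping all perturbations $C^0$-small keeps $\XX$ close to $\spt\MM$.

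The key claim, and the main obstacle, is a uniform index estimate. There should be $\eps_0>0$ such that for $\eps<\eps_0$, every point $x$ near the singular set with $|\nabla u_\eps(x)|$ small relative to the local scale has $\nabla^2u_\eps(x)$ with at least $n-k_0+1$ eigenvalues $\leq -c\,\eps^{-1}\cdot(\text{scale factor})$. A small Morse perturbation then preserves the index. I would argue by contradiction. Take $\eps_i\to0$ and points $x_i$ violating the claim, and rescale parabolically by $\eps_i$ around $(x_i,u(x_i))$. By~\cite{BamlerLai} and the classification of ancient noncollapsed flows, the rescaled flows subconverge to a convex ancient model: a round shrinking cylinder $\IR^{k}\times\IS^{n-k}$ with $k\leq k_0$, a bowl times $\IR^{k-1}$, an ancient oval, or a flat region. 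Flat limits and regions where the rescaled $\nabla u$ is large are excluded by the near-critical condition. The delicate point is that this limit need not come from a tangent flow at a single singular point, because singular points can accumulate. The compactness theory of~\cite{BamlerLai} nonetheless gives that all blow-up limits at all scales are of these types. The rescaled $u_{\eps_i}$ then converge smoothly to the mollified arrival function $U*\varphi_1$ of the model.

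It remains to check the model directly. For the cylinder, $U=-|y|^2/(2(n-k))$ in the $n-k+1$ directions $y$ normal to the axis, so $U*\varphi_1$ has Hessian $-\tfrac1{n-k}$ in those directions. That gives index at least $n-k+1\geq n-k_0+1$. For a bowl or oval with $k\leq k_0$ axes of extent, the arrival function is concave in the $n-k+1$ directions in which the model is compact or sphere-like. I would establish strict concavity after convolution from convexity of the level sets together with the asymptotics. In particular $U$ is concave along the $\IS^{n-k}$-factor directions, uniformly in compact sets after convolution. I expect this strict concavity to be the hardest step, especially for near-critical points far out on a bowl soliton, where the required rescaling must be by the local curvature scale rather than by $\eps$. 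I would try to handle it with a two-scale argument, using $\eps$ relative to $H^{-1}$. Once the estimate holds, (a)–(c) follow, and the noncritical regions are handled by the cutoff construction.
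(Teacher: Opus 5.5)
Your outline follows the paper's architecture: Bamler--Lai arrival functions near $\MM^{\sing}$, mollification at scale $\eps$, blending on a regular collar, a blow-up at scale $\eps$ using the Bamler--Lai blow-up and classification theorems, and then a Morse perturbation. The gap is the step that actually produces the index bound, which you leave open. What must be shown is that, for every nonflat limit, $D^2(\chi*u_\infty)(0)$ has at least $n-k'+1$ strictly negative eigenvalues.

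The route you sketch for this would not work. Convexity of the level sets only makes $u_\infty$ quasi-concave. That controls $D^2u_\infty$ on tangent spaces of level sets, but not the normal and mixed components, which involve $\partial_tH$ and $\nabla H$. Moreover, negativity of the Hessian in some directions at each point is not preserved by mollification: positive contributions elsewhere in the support of $\chi$ can cancel it. For ovals there is also no product splitting that defines ``the $\IS^{n-k}$-factor directions'' globally. Finally, asymptotics at infinity give no information about the Hessian at a point at unit scale.

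The missing ingredient is that the arrival function of a convex ancient model is globally concave. This is Bourni--Langford--Lynch, Proposition~3.2, essentially a Harnack-type statement. With it the argument is short and treats all models in the classification uniformly, including the oval and flying-wing factors your list omits.

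First, at a regular point, $D^2u_\infty$ restricted to the tangent space of the level set equals $-A/H$. This is negative definite on an $(n-k'+1)$-dimensional subspace, because the slices split off at most $k'-1$ lines and are strictly convex otherwise. By min--max, the Hessian there has at least $n-k'+1$ negative eigenvalues. Since it is also negative semidefinite, its kernel has dimension at most $k'$.

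Concavity is what lets this survive convolution. Suppose $v$ lies in the kernel of $D^2(\chi*u_\infty)(0)$. Then $\langle D^2(\chi*u_\infty)(0)v,v\rangle=0$ is the pairing of the nonpositive distribution $\partial_v^2u_\infty$ with $\chi$, and $\chi>0$ on the unit ball. Hence $v$ lies in the kernel of $D^2u_\infty$ at every regular point of that ball. So the kernel of the mollified Hessian has dimension at most $k'$, and that Hessian has at least $n+1-k'$ negative eigenvalues.

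This is purely qualitative, so you need neither your uniform near-critical estimate nor a two-scale argument. It suffices to prove the count at exact critical points of $u_\eps$ by contradiction. Then fix $\eps$ and use that the Hessian condition is open near the compact critical set when making the Morse perturbation.

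Your worry about points far out on a bowl is also moot. For a translating limit with velocity $V$ one has $\partial_Vu_\infty\equiv1$, so $\chi*u_\infty$ has no critical points at all. One further correction: the Hessian is invariant under the parabolic rescaling $\hat u_i(y)=\eps_i^{-2}(\hat u(x_i+\eps_iy)-\hat u(x_i))$. Any quantitative bound should therefore read $\le -c$, not $-c\,\eps^{-1}$.
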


Let us describe the conclusion of the theorem in more elementary geometric terms.
Let $0=t_0<t_1<\ldots<t_m<t_{m+1}=T$, where $t_1,\ldots,t_m$ are the critical values of $\tau$, and let $q_i=(p_i,t_i)$ be the corresponding critical points.
For every $i=0,\ldots,m$, the slices
\[
 M_t:=\{x\in\IR^{n+1}:(x,t)\in\XX\},\qquad t\in(t_i,t_{i+1}),
\]
are smooth embedded hypersurfaces moving by isotopy.
Thus their topology does not change between consecutive critical times, and they agree with the original flow outside $\UU$.
Near $q_i$ there are coordinates $y=(y_1,\ldots,y_{n+1})$ on $\XX$ in which
\[
 \tau(y)=t_i-y_1^2-\ldots-y_{j_i}^2
                 +y_{j_i+1}^2+\ldots+y_{n+1}^2,
 \qquad j_i\geq n-k_0+1.
\tag{2.1}\label{eq_Morse_model}
\]
So the two smooth families on either side fit together away from an arbitrarily small neighborhood of $p_i$, and inside that neighborhood they are the level sets of a Morse function with one critical point.
Equivalently, the transition is a handle of index $j_i$, or a $(j_i-1)$-surgery on the $n$-dimensional level hypersurface.
Since the level at $t_i$ itself has a Morse singularity, we may instead cut at regular levels immediately before and after $t_i$ and obtain smooth families on closed intervals whose endpoints are joined by the Morse cobordism~\eqref{eq_Morse_model}.

Notice that the basic model associated with $\IR^k\times\IS^{n-k}$ has Morse index $n-k+1$.
For example, a neck pinch of a surface has index $2$, whereas the disappearance of a spherical component has index $3$.

\medskip

For a compact mean-convex flow, the entire spacetime track is encoded by a global arrival function on the swept-out region.
So in this case one may smooth this function alone and read all topological changes directly from its level sets.
This gives the following simpler formulation.

\begin{Corollary}\label{Cor_mean_convex}
Let $u$ be the arrival function of a compact $n$-dimensional mean-convex flow between two regular levels, the latter possibly empty.
Suppose that every singular point has a cylindrical tangent flow $\IR^k\times\IS^{n-k}$ with $k\leq k_0\leq n-1$.
For every neighborhood $U$ of the singular set in the swept-out region, there is a smooth Morse function $\td u$ that is arbitrarily close to $u$ in the $C^0$-sense and satisfies $\td u=u$ outside $U$.
Every critical point of $\td u$ has index at least $n-k_0+1$.
Its levels describe the topological change by a finite sequence of surgeries.
\end{Corollary}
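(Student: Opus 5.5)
The plan is to deduce Corollary~\ref{Cor_mean_convex} directly from the arrival-function argument that underlies Theorem~\ref{Thm_main}, using the global arrival function in place of the local ones. Let $K$ be the swept-out compact region and $u:K\to[0,T]$ the arrival function. By White~\cite{WhiteNature,WhiteSubsequent}, $u$ is Lipschitz. It is smooth with $|\nabla u|=1/H>0$ near regular points. Its level sets $\{u=t\}$ are the time-$t$ slices of the flow. First I cover the singular set by finitely many small open sets contained in $U$, and take a cutoff $\chi$ that equals $1$ near the singular set and is supported in $U$. I then set
\[
 u_\eps := (1-\chi)\,u+\chi\,(u*\phi_\eps).
\]
Here $\phi_\eps$ is a standard mollifier. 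On the transition region, $u$ is smooth with $\nabla u\neq0$, so $u_\eps\to u$ in $C^1$ there. Hence $u_\eps$ has no critical points outside a small neighborhood of the singular set, and $\|u_\eps-u\|_{C^0}\le C\eps$ by the Lipschitz bound.

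The key step is to show that for all small $\eps$ the following holds. At every point $x$ within distance $\delta$ of the singular set, either $\nabla u_\eps(x)\neq0$, or $D^2u_\eps(x)$ has at least $n-k_0+1$ eigenvalues below $-c/\eps$ (after rescaling). This is the main obstacle, and I argue it by contradiction and blow-up. Suppose there are points $x_i$ and scales $\eps_i\to0$ violating it. Rescale by $\eps_i^{-1}$ around $(x_i,u(x_i))$; the rescaled functions $\eps_i^{-2}(u(x_i+\eps_i y)-u(x_i))$ are arrival functions of rescaled flows. By the Haslhofer--Kleiner theory~\cite{HaslhoferKleiner} these subconverge, locally uniformly and smoothly wherever the limit is regular, to the arrival function $\bar u$ of an ancient noncollapsed convex limit flow, or to a static/empty one. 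The cylindrical hypothesis on tangent flows, together with the structure theory, restricts the limit to a flow whose blow-downs are cylinders $\IR^{k'}\times\IS^{n-k'}$ with $k'\le k_0$. The mollified functions converge in $C^2$ to $\bar u*\phi_1$. We then need the model computation for this limit. The arrival function of a convex ancient flow is concave in the $n-k'+1$ directions transverse to the translation-invariant directions, using the Hessian formula $D^2\bar u=-(A+\text{terms})/H^2$ on the level sets. Mollification preserves concavity and makes it strict along at least $n-k'+1\ge n-k_0+1$ directions wherever the gradient vanishes. This should come from strict convexity of the cross-sections, and from the time direction since $u$ is increasing on the flow. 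We also need a compactness argument to get a uniform constant $c$ over the class of models. This contradicts the assumed failure. For the globally mean-convex setting, only the classical theory is needed here.

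Finally I make a Morse perturbation. Set $\td u := u_\eps+\chi\,\ell$, where $\ell$ is a generic affine function (or a small generic sum of bump perturbations) with small $C^2$ norm. By the standard transversality lemma, $\td u$ is Morse with distinct critical values for generic choice. The Hessian condition is open under $C^2$-small perturbation, so every critical point keeps at least $n-k_0+1$ negative eigenvalues, i.e.\ index $\ge n-k_0+1$. Moreover, $\td u=u$ outside $U$ and $\td u$ is $C^0$-close to $u$. Elementary Morse theory then shows that the sublevel/level structure of $\td u$ between critical values changes only by isotopy. Across each critical value it changes by attaching a handle of index $j\ge n-k_0+1$, i.e.\ a $(j-1)$-surgery on the level hypersurface. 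This yields the finite sequence of surgeries in the statement.
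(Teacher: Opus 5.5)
Your overall route is the paper's: mollify the global arrival function, blend on a regular collar, blow up at the mollification scale, contradict via the ancient models, then make a Morse perturbation. The gap is in the model computation, which is the heart of the argument.

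\emph{Concavity.} The level-set identity only gives the tangential block $D^2\bar u|_{TM_t}=-A/H$ (note $1/H$, not $1/H^2$), and this block alone amounts to quasi-concavity. The normal--normal and mixed entries are $-\partial_tH/H^3$ and $-\nabla H/H^2$. Controlling them is exactly Hamilton's differential Harnack inequality, which is why the paper invokes Bourni--Langford--Lynch, Proposition~3.2, for concavity of $\bar u$. You need full concavity, i.e.\ distributional Hessian $\le0$ on $\spt\phi_1$. Only then does positivity of the kernel force every $v$ with $\langle D^2(\phi_1*\bar u)(0)v,v\rangle=0$ into $\ker D^2\bar u$ at every regular point of the support.

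\emph{The count.} The extra strictly negative direction cannot come from ``the time direction since $u$ is increasing'', because that is first-order information. For the bowl, $\bar u(x)=x_{n+1}-f(x')$ is affine in the translation direction, which is the normal direction at the tip. What is needed is the paper's dichotomy. By Hamilton's strong maximum principle the slices are $\IR^j\times N$ with $N$ strictly convex, and $j\le k'$. If $j=k'$, then $N$ has a spherical backward limit, so it is a round shrinking sphere by Huisken's monotonicity formula. The model is then the cylinder, where $D^2\bar u$ is explicitly $-1/(n-k')$ times the projection onto the $n-k'+1$ directions normal to $\IR^{k'}$. Otherwise $j\le k'-1$, and $-A/H$ alone already yields $n-k'+1$ negative directions of the full Hessian by min--max.

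\emph{The planar limit.} You list a static plane among the possible limits but never exclude it. Write $\hat u_i(y)=\eps_i^{-2}(u(x_i+\eps_iy)-u(x_i))$ and let $H_i$ be the rescaled mean curvatures. In the planar case $|\nabla\hat u_i|=1/H_i\to\infty$ locally, so $\hat u_i$ does not converge, and your $C^2$ convergence of the mollified functions breaks down. This is exactly where $\nabla u_{\eps_i}(x_i)=0$ must be used. Smooth convergence to the plane plus mean-convexity make the vectors $\nabla\hat u_i$ on $\spt\phi_1$ almost parallel to a fixed normal and all pointing to the same side, so their average $\nabla(\phi_1*\hat u_i)(0)$ cannot vanish.

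\emph{The quantitative bound.} ``Eigenvalues below $-c/\eps$'' is a scaling error. Hessians are invariant under $y\mapsto\eps^{-2}(u(x+\eps y)-u(x))$, so $D^2(\phi_1*\hat u_i)(0)=D^2(\phi_{\eps_i}*u)(x_i)$, which stays bounded near a neck pinch. No uniform constant over the class of models is needed either. The number of strictly negative eigenvalues is lower semicontinuous, which already gives the contradiction. For fixed $\eps$, compactness of the critical set supplies the margin for the Morse perturbation.
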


\medskip

Let us point out a consequence for $k$-convex flows, where $1\leq k\leq n$.
Recall that a hypersurface is $k$-convex if its principal curvatures satisfy $\la_1+\ldots+\la_k\geq0$, with $\la_1\leq\ldots\leq\la_n$.
By~\cite[Theorem~1.17]{CheegerHaslhoferNaber}, the singular tangent flows of the mean curvature flow starting from a smooth compact embedded $k$-convex hypersurface are multiplicity-one cylinders with at most $k-1$ Euclidean directions.
So Theorem~\ref{Thm_main} and Corollary~\ref{Cor_mean_convex} apply with $k_0=k-1$, and all Morse indices are at least $n-k+2$.
Running the level sets of $\td u$ backwards from extinction gives a handle decomposition of the enclosed region with handles of index at most $k-1$.
In particular, in the $2$-convex case this gives the usual description as a union of balls with $1$-handles attached, without constructing a flow with surgery (cf.~\cite{HuiskenSinestrari,BrendleHuisken,HaslhoferKleiner2}).

\medskip

\noindent\textbf{Acknowledgements.}
R.~H. Bamler is supported by NSF grant DMS-2604326.
F.~Schulze has received funding from the European Research Council (ERC) under the European Union's Horizon 1.1 research and innovation programme, grant agreement No.~101200301 (GENREG).
L.~Wang is partially supported by NSF grant DMS-2506407.
We thank the Banff International Research Station (BIRS) for its hospitality.
For the purpose of open access, the authors have applied a Creative Commons Attribution (CC BY) licence to any author accepted manuscript version arising.

\medskip

\noindent\textbf{Statement on the use of AI.}
The mathematical solution presented in this paper arose from a discussion among the authors at the BIRS workshop ``Geometric Flows and Related Topics'' in Banff in August 2026.
AI was used only to assist with writing the paper, not to develop the mathematical ideas.
The manuscript underwent extensive feedback and repeated revisions by the authors.
We have independently read and verified the paper and take full responsibility for its contents.

\section{Proof}

\begin{proof}[Proof of Theorem~\ref{Thm_main}]
Fix a nonnegative radially symmetric $\chi\in C_c^\infty(\IR^{n+1})$ that is positive on $\IB^{n+1}_1$ and satisfies $\int_{\IR^{n+1}}\chi=1$, where $\IB^{n+1}_1$ denotes the unit ball in $\IR^{n+1}$.  For $\eps>0$, set
\[
 \chi_\eps(x):=\eps^{-(n+1)}\chi(x/\eps),\qquad x\in\IR^{n+1}.
\]

By~\cite[Theorem~1.11]{BamlerLai}, every point of $\MM^{\sing}$ has a spacetime neighborhood in which $\spt\MM$ is the graph
\[
 t=u(x)
\tag{3.1}\label{eq_arrival_graph}
\]
of a continuous arrival function\footnote{In fact, the arrival function is continuously differentiable and twice differentiable, but need not be $C^2$; see~\cite{ColdingMinicozziDifferentiability,ColdingMinicozziRegularity}. In an arrival neighborhood the singular set is precisely $\{(x,u(x)):\nabla u(x)=0\}$. Thus the standard Morse perturbation argument and Morse lemma do not apply directly to $u$.} that, at every regular point, is smooth with nonvanishing gradient.
The corresponding arrival functions agree on overlaps.
Endow $\spt\MM$ with the subspace topology induced from $\IR^{n+1}\times[0,T]$.
It follows that there is a neighborhood $\VV$ of $\MM^{\sing}$ in $\spt\MM$ on which the spatial projection
$\pi:\VV\to\IR^{n+1}$, $(x,t)\mapsto x$, is a local homeomorphism.
We equip $\VV$ with the smooth flat structure obtained by pulling back the standard spatial coordinates.
Let $u:=\mathbf{t}|_\VV:\VV\to(0,T)$.
Thus, in each spatial coordinate neighborhood, $u$ is represented by the corresponding arrival function in~\eqref{eq_arrival_graph}.

Choose neighborhoods $\MM^{\sing}\subset\VV'\Subset\VV''\Subset\VV\cap\UU$, open in $\spt\MM$.
For sufficiently small $\eps$, convolution of $u$ with $\chi_\eps$ in the spatial coordinates is defined on $\VV''$ and is well-defined, since all transition maps are the identity.
We use this convolution on $\VV'$, blend it with $u$ via a suitable partition of unity on the regular collar $\VV''\setminus\ov{\VV'}$, and leave $\mathbf{t}$ unchanged outside $\VV''$.
The blending can be arbitrarily small in $C^\infty$ on the collar and hence introduces no critical points.
Denote the resulting function on $\spt\MM$ by $u_\eps$.
The map
\[
 q\longmapsto\big(\pi(q),u_\eps(q)\big),\qquad q\in\spt\MM,
\tag{3.2}\label{eq_smoothed_embedding}
\]
where $\pi$ also denotes the spatial projection on the rest of $\spt\MM$,
is an embedding for small $\eps$ whose image agrees with $\spt\MM$ outside $\UU$ and is arbitrarily close to it.
It remains to control the critical points.

\begin{Claim}\label{Cl_spectral}
For all sufficiently small $\eps>0$, at every critical point $q$ of $u_\eps$ the Hessian $D^2u_\eps(q)$, computed in the flat spatial coordinates, has at least $n-k_0+1$ strictly negative eigenvalues, counted with multiplicity.
\end{Claim}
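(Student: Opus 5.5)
We argue by contradiction and blow-up. Suppose there are $\eps_i\to0$ and critical points $q_i$ of $u_{\eps_i}$ at which $D^2u_{\eps_i}(q_i)$ has at most $n-k_0$ strictly negative eigenvalues. The function $u_{\eps_i}$ differs from $\mathbf{t}$ only on $\VV''$. On the collar $\VV''\setminus\ov{\VV'}$ it is $C^1$-close to the regular function $u$, whose gradient is bounded away from zero there. Hence for large $i$ every $q_i$ lies in $\VV'$, where $u_{\eps_i}=u*\chi_{\eps_i}$. After passing to a subsequence, $q_i\to q_\infty\in\ov{\VV'}$.

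\textbf{Step 1: the limit point is singular.} If $q_\infty$ were regular, then $u$ would be smooth near $\pi(q_\infty)$ with $\nabla u\neq0$, and $\nabla(u*\chi_{\eps_i})=(\nabla u)*\chi_{\eps_i}\to\nabla u$ uniformly near $\pi(q_\infty)$. This contradicts $\nabla u_{\eps_i}(q_i)=0$. Hence $q_\infty=(x_\infty,t_\infty)\in\MM^{\sing}$.

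\textbf{Step 2: parabolic rescaling at scale $\eps_i$.} Write $q_i=(x_i,u_{\eps_i}(q_i))$ and set
\[
 u^{(i)}(y):=\eps_i^{-2}\big(u(x_i+\eps_i y)-u(x_i)\big).
\]
The arrival function transforms parabolically, so that $u_{\eps_i}(x_i+\eps_i y)=u(x_i)+\eps_i^2\,(u^{(i)}*\chi)(y)$. In particular $\nabla(u^{(i)}*\chi)(0)=0$, and $D^2(u^{(i)}*\chi)(0)$ is a positive multiple of $D^2u_{\eps_i}(q_i)$, so it has the same inertia. The flows $\MM^{(i)}$, obtained from $\MM$ by translating $(x_i,u(x_i))$ to the origin and parabolically rescaling by $\eps_i^{-1}$, are locally mean-convex near the origin at all scales. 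By~\cite[Theorem~1.11]{BamlerLai} and its blow-up control, they subconverge locally smoothly away from the singular set to an ancient, noncollapsed, convex limit flow $\MM^\infty$. The limit is a static or shrinking round cylinder $\IR^{k'}\times\IS^{n-k'}$ with $k'\le k_0$ (up to rotation), a translator such as a bowl-soliton product, or an ancient oval. The arrival functions $u^{(i)}$ converge locally uniformly to the arrival function $u^\infty$ of $\MM^\infty$. If instead the scale $\eps_i$ is much smaller than the local singular scale at $x_i$, the limit is a static plane or smooth region with $\nabla u^\infty\ne0$, which again contradicts the critical point as in Step 1. Since only the convolution enters, $C^0_{\rm loc}$ convergence of $u^{(i)}$ suffices to get $C^2_{\rm loc}$ convergence of $u^{(i)}*\chi\to u^\infty*\chi$, because derivatives fall on $\chi$. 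So $0$ is a critical point of $u^\infty*\chi$. Lower semicontinuity of the count of strictly negative eigenvalues does not directly give the contradiction, so I would aim for a stronger statement: for every model, at every critical point of $u^\infty*\chi$, the Hessian has at least $n-k_0+1$ negative eigenvalues. By compactness of the normalized model class, these eigenvalues are bounded away from zero uniformly, and the property then passes to $u^{(i)}*\chi$ for large $i$.

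\textbf{Step 3: the model computation, which is the main obstacle.} One must show that for every convex ancient model arising as a limit, the convolution $u^\infty*\chi$ is concave in at least $n-k'+1\ge n-k_0+1$ directions, strictly and quantitatively, at its critical points. For the round cylinder, $u^\infty(y)=-|y''|^2/(2(n-k'))$ with $y''\in\IR^{n-k'+1}$. This function is independent of the $k'$ axial variables and strictly concave in the remaining $n-k'+1$ variables, and convolution preserves both properties. For the other models (bowl$\times\IR^{k'-1}$, ovals, and other noncollapsed convex ancient solutions), $u^\infty$ is concave on the convex swept region, because time-slices of a convex flow are convex and shrink. Its superlevel sets are convex, but $u^\infty$ need not itself be concave, which is the technical point. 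I would try two arguments. The first uses that along the $n-k'+1$ "sphere" directions the model is asymptotically the cylinder, so $D^2u^\infty\le -c$ in those directions away from a compact region. The second uses the rigidity of critical points: $u^\infty*\chi$ has a critical point only near the tip or axis region, where the classification (Brendle--Choi, Angenent--Daskalopoulos--Sesum, Choi--Haslhofer--Hershkovits) provides explicit control. Alternatively, I would arrange in Step 2 to blow up at the singular scale, so that the limit is always the shrinking cylinder. This is possible because the critical point of $u_{\eps_i}$ lies within $O(\eps_i)$ of a singular point, where the tangent flow is the cylinder. The quantitative concavity from the cylinder case then gives the contradiction.
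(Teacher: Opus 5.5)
Your outline through Step~2 follows the paper: a contradiction sequence, rescaling by $\eps_i$ so the mollifier becomes $\chi$, the Bamler--Lai blow-up, and $C^2_{\rm loc}$ convergence of $\chi*u^{(i)}$ from $C^0_{\rm loc}$ convergence. But Step~3 carries the whole content of the claim, you leave it open, and the reason you give for its difficulty is false. The arrival function of a convex ancient flow \emph{is} concave; this is Proposition~3.2 of Bourni--Langford--Lynch, which the paper uses. To see it, let $\nu=H\nabla u$ and let $X$, $Y$ be tangent to a level set. Then $D^2u(X,Y)=-A(X,Y)/H$, $D^2u(X,\nu)=-\langle\nabla H,X\rangle/H^2$ and $D^2u(\nu,\nu)=-\partial_tH/H^3$. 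So $D^2u\le0$ is exactly $A\ge0$ together with Hamilton's Harnack inequality $\partial_tH+2\langle\nabla H,V\rangle+A(V,V)\ge0$ for ancient convex flows.

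With concavity, Step~3 is short. For the cylinder use the explicit formula. The slices of the other models split off at most $k'-1$ lines and are strictly convex otherwise, so $-A/H$ has at least $n-k'+1$ negative directions. Hence $D^2u^\infty\le0$ has kernel of dimension at most $k'$ at every regular point. Since $D^2u^\infty\le0$ distributionally and $\chi>0$ on $\IB^{n+1}_1$, any $v$ with $\langle D^2(\chi*u^\infty)(0)v,v\rangle=0$ satisfies $D^2u^\infty v=0$ at every regular point of the unit ball. So $D^2(\chi*u^\infty)(0)$ has at least $n-k'+1\geq n-k_0+1$ strictly negative eigenvalues. Contrary to your remark, lower semicontinuity of the number of strictly negative eigenvalues then gives the contradiction directly; no uniform gap over the model class is needed.

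The substitutes you sketch do not close the gap.
\begin{itemize}
\item Asymptotic cylindricity gives $D^2u^\infty\le-c$ in the sphere directions only far out. The critical point of $\chi*u^\infty$ may lie in the compact central region of an oval, where the model is not close to the cylinder. Without concavity, negative contributions elsewhere cannot be transferred through the convolution, since positive ones could cancel them.
\item ``Explicit control'' from the classification is not an argument; there are no explicit formulas for higher-dimensional ovals.
\item Blowing up ``at the singular scale'' to force a cylindrical limit is not available either. The scale is dictated by the mollifier, and the basepoints move with $i$. Even granting a singular point within $O(\eps_i)$ of $x_i$, the scale below which the flow is close to its tangent cylinder there is not uniform in $i$. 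So nothing forces the limit at scale $\eps_i$ to be the cylinder rather than, say, an ancient oval.
\item In the planar case there is no limiting arrival function, so ``as in Step~1'' does not apply. Replace it by the paper's argument: the unit vectors $\nabla u^{(i)}/|\nabla u^{(i)}|$ are close to the normal of the limit plane and, by local mean-convexity, point to the same side, so their $\chi$-weighted average cannot vanish.
\end{itemize}
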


\begin{proof}
Suppose that the claim was false.
Then we can find a sequence $\eps_i\to0$ and critical points $q_i\in\VV'$ such that $D^2u_{\eps_i}(q_i)$ has at most $n-k_0$ strictly negative eigenvalues.
Set $x_i:=\pi(q_i)$.
After passing to a subsequence, $q_i$ converges to a point of $\MM^{\sing}$ whose tangent flow is a cylinder $\IR^k\times\IS^{n-k}$, where $k\leq k_0$.
Writing $\hat u$ for the representative of $u$ in a spatial coordinate neighborhood containing the $q_i$, set
\[
 \hat u_i(y):=\eps_i^{-2}\big(\hat u(x_i+\eps_i y)-\hat u(x_i)\big).
\tag{3.3}\label{eq_rescale}
\]
Since $\int\chi=1$, in these coordinates we have
\[
 (\chi*\hat u_i)(y)=\eps_i^{-2}
 \big(\hat u_{\eps_i}(x_i+\eps_i y)-\hat u(x_i)\big).
\]
Thus, at the origin,
\[
 \nabla(\chi*\hat u_i)(0)=0,
 \qquad
 D^2(\chi*\hat u_i)(0)=D^2\hat u_{\eps_i}(x_i).
\tag{3.4}\label{eq_rescaled_critical}
\]

By the blow-up theorem~\cite[Theorem~1.12]{BamlerLai}, a subsequence of the rescaled flows converges to either a static multiplicity-one affine plane or a convex ancient flow that is asymptotic to an $(n,k')$-cylinder $\IR^{k'}\times\IS^{n-k'}$, where $k'\leq k$, and belongs to the classification in~\cite[Theorem~1.2]{BamlerLai}.
The planar case is impossible.
Indeed, at the origin we have $0=\nabla(\chi*\hat u_i)(0)=(\chi*\nabla\hat u_i)(0)$.
On the other hand, Brakke's regularity theorem gives smooth convergence of the rescaled flows to the static plane.
Together with local strict mean convexity, this implies that, on the convolution region, the vectors $\nabla\hat u_i$ are almost perpendicular to the limiting plane and point to the same side, so their convolution cannot vanish.
So $\hat u_i$ converges locally uniformly to the arrival function $u_\infty$ of a nonflat ancient model, and
\[
 \chi*\hat u_i\longrightarrow\chi*u_\infty
 \qquad\text{locally smoothly}.
\tag{3.5}\label{eq_smooth_conv}
\]

By~\cite[Proposition~3.2]{BourniLangfordLynch}, the function $u_\infty$ is concave.
Moreover, at every regular point its Hessian has at least $n-k'+1$ strictly negative eigenvalues.
To see this for the cylinder, we can write
\[
 u_\infty(y,z)=c-\frac{|z|^2}{2(n-k')}.
\]
The other models are ancient ovals, bowl solitons and flying wing translating solitons, each times suitable Euclidean factors.
The time slices of these models split off at most $k'-1$ Euclidean factors and are strictly convex in the remaining directions.
On a regular level set, the restriction of $D^2u_\infty$ to its tangent space is $-A/H$, with $A$ computed using the outward normal.
It therefore has at least $n-k'+1$ strictly negative directions.
This also covers the translating models, even though their arrival functions are affine in the translating direction.
Convolution preserves this lower bound on the number of strictly negative eigenvalues.
Indeed, since the distributional Hessian of $u_\infty$ is negative semidefinite and $\chi$ is positive, any vector in the kernel of $D^2(\chi*u_\infty)(0)$ must lie in the kernel of the Hessian of $u_\infty$ on every regular open subset meeting the support of $\chi$.
So this kernel has dimension at most $k'$, which contradicts~\eqref{eq_rescaled_critical} and~\eqref{eq_smooth_conv}.
\end{proof}

Now fix $\eps>0$ small enough such that the claim holds.
Since the condition on the Hessian is open near the compact critical set, a further arbitrarily small perturbation, supported in $\VV''$, makes $u_\eps$ Morse with distinct critical values, without changing the lower bound on the number of negative eigenvalues or creating critical points on the regular collar.
Since this number is a lower bound for the Morse index at a nondegenerate critical point, the resulting Morse indices are at least $n-k_0+1$.
The resulting graph as in~\eqref{eq_smoothed_embedding} gives the desired $\XX$.
The description~\eqref{eq_Morse_model} and the surgery statement follow from the Morse lemma.
\end{proof}

\section{The mean-convex case}

\begin{proof}[Proof of Corollary~\ref{Cor_mean_convex}]
We give the argument separately to explain its simpler form in this setting and to obtain a perturbation of the arrival function itself.
We may convolve the \emph{global} arrival function $u$ directly, so no local patching is needed, apart from blending with $u$ on a regular collar outside the singular set.
If the analogue of Claim~\ref{Cl_spectral} was false, then the same blow-up argument would give a limiting ancient flow; the critical-point condition rules out a planar limit.
The classical theory~\cite{WhiteNature,WhiteSubsequent,HaslhoferKleiner} gives a convex noncollapsed ancient limit whose asymptotic cylinder has at most $k_0$ Euclidean directions, without using~\cite{BamlerLai}.
The limit is either a round shrinking cylinder, or it splits off at most $k_0-1$ Euclidean factors and is strictly convex in the other directions.
Indeed, splitting off $k_0$ factors would leave a flow with a spherical backward limit, which is itself a family of round shrinking spheres by Huisken's monotonicity formula.
Its arrival function is concave by~\cite[Proposition~3.2]{BourniLangfordLynch}, and the same computation on regular level sets, together with the explicit cylindrical formula, gives at least $n-k_0+1$ negative Hessian directions.
Positivity of the convolution kernel preserves this bound.
A small Morse perturbation, supported in $U$, proves the corollary.
\end{proof}

\section{The two-dimensional case}

For $n=2$, Theorem~\ref{Thm_main} gives Morse indices at least $2$: the only changes are neck pinches and disappearances of spherical components, so genus cannot increase between regular times.
The monotonicity of the genus of the regular part, including at singular times, can also be seen directly via the canonical neighborhood theorem~\cite[Corollary~1.18]{ChoiHaslhoferHershkovitsWhite}.

For $\eps>0$ to be determined below, fix a corresponding canonical neighborhood scale $r>0$.
It suffices to compare arbitrary times $t_1<t_2$ with $t_2-t_1$ sufficiently small compared with $r^2$.
We choose a closed neighborhood of the singular set in $\spt\MM\cap(\IR^3\times[t_1,t_2])$.
We denote the time-slices of this neighborhood by $M'_t$ and require that all regular points of $M'_t$ are covered by canonical neighborhoods.
In these neighborhoods, the flow is $\eps$-close, after rescaling to unit mean curvature, to a round shrinking sphere or cylinder, a bowl soliton or an ancient oval.
We can moreover arrange that the boundaries $\partial M'_t$ consist of circles that are cross-sections of $\eps$-necks of scale comparable to $r$ and are transported by the normal flow.
By the choice of $t_2-t_1$, these necks remain regular throughout the interval, and the complements $M''_t:=\ov{(\spt\MM)_t\setminus M'_t}$ vary by isotopy.
For every $t\in[t_1,t_2]$, the components of the regular part of $M'_t$ are spheres, neck-covered tori, or disks and annuli covered by $\eps$-necks and $\eps$-caps, with some boundary circles possibly replaced by horn ends approaching the singular set.

Let $C$ be an annular component of the regular part of $M'_{t_2}$ whose two boundary circles lie in $\partial M'_{t_2}$ and hence are adjacent to $M''_{t_2}$.
Thus $C$ is compact, has no horn ends, and is covered by $\eps$-necks.
Follow $C$ backwards by the normal flow.
Its boundary circles follow $\partial M'_t$, and the trajectories cannot cross this moving boundary, so they remain in the canonical neighborhoods of $M'_t$.
In all models, neck points remain in necks backwards in time and their curvature decreases, so for $\eps$ sufficiently small this also holds in our canonical neighborhoods.
The backward curvature bound and compactness exclude singular endpoints.
Consequently, $C$ remains a compact annular component with both boundary circles contained in $M''_t$ throughout $[t_1,t_2]$; in particular, no horn end or cap can develop.
Distinct annuli remain disjoint by uniqueness of the normal flow.
Neck-covered tori likewise persist backwards.

$M''_{t_1}$ and $M''_{t_2}$ can be identified by isotopy.
The other annular components have at least one horn end and are attached to $M''_t$ along at most one boundary circle, so deleting them does not change the total genus; the same holds for disk and spherical components.
The only changes from $t_1$ to $t_2$ that can affect genus are replacements of annuli attached at both ends by disks or horn-ended annuli, and disappearances of neck-covered tori.
None of these changes increases the total genus.

\end{document}